\journal{arXiv}
\DeclareSymbolFont{largesymbolsA}{U}{txexa}{m}{n}
\DeclareMathSymbol{\bigtimes}{\mathop}{largesymbolsA}{16}
\newtheorem{theorem}{Theorem}
\numberwithin{equation}{section}
\newtheorem{remark}{Remark}
\newcommand{\bfs}[1]{{\boldsymbol #1}}
\newcommand{\Question}[1]{\marginpar{}}
\renewcommand{\Question}[1]{%
            \marginpar{\flushleft\scriptsize\bfseries\upshape#1}}
\begin{document}

\begin{frontmatter}


\title{Higher-order generalized-$\alpha$ methods for hyperbolic problems}

%
\author[ad1]{Pouria Behnoudfar\corref{corr}}
\ead{pouria.behnoudfar@postgrad.curtin.edu.au}
\cortext[corr]{Corresponding author}
\author[ad1]{Quanling Deng}
\ead{Quanling.Deng@curtin.edu.au}

\author[ad1,ad2]{Victor M. Calo}
\ead{Victor.Calo@curtin.edu.au}
%

%
%
%
%
%
\address[ad1]{Applied Geology, School of Earth and Planetary Sciences, Curtin University, Kent Street, Bentley, Perth, WA 6102, Australia}
\address[ad2]{Mineral Resources, Commonwealth Scientific and Industrial Research Organisation (CSIRO), Kensington, Perth, WA 6152, Australia}
%

\begin{abstract}
The generalized-$\alpha$ time-marching method provides second-order accuracy in time and controls the numerical dissipation in the high-frequency region of the discrete spectrum. This method includes a wide range of time integrators. We increase the order of accuracy of the method while keeping the unconditional stability and the user-control on the high-frequency numerical dissipation. The dissipation is controlled by a single parameter as in the original method. Our high-order schemes require simple modifications of the available implementations of the generalized-$\alpha$ method.
\end{abstract}

\begin{keyword}
generalized-$\alpha$ method \sep high-order time integration \sep spectrum analysis \sep hyperbolic equation \sep dissipation control \sep stability analysis

\end{keyword}

\end{frontmatter}

\vspace{0.5cm}

\section{Introduction}
Chung and Hulbert in \cite{chung1993time} introduced the generalized-$\alpha$ method for solving hyperbolic equations arising in structural dynamics. The method has second-order accuracy in time, unconditional stability, and user-control on the high-frequency numerical dissipation. Consequently, the method has been widely used for various applications.\\
The Newmark-$\beta$ method introduced in \cite{newmark1959method} is second-order accurate in time. However, the numerical dissipation cannot be controlled and the numerical solution is too dissipative in the low-frequency region. The generalized-$\alpha$ method improves on the $\phi$ method of Wilson~\cite{wilson1968computer}, the $\phi_1$ method of Hoff and Pahl \cite{hoff1988development}, and the $\rho$ method of Bazzi and Anderheggen \cite{bazzi1982rho}. These methods are second-order accurate and attain high-frequency dissipation but still have some low-frequency damping.\\ 

The  generalized-$\alpha$ method generalizes the well-known HHT-$\alpha$ method of Hilber, Hughes, Taylor \cite{hilber1977improved} and the WBZ-$\alpha$ method of Wood, Bossak, and Zienkiewicz \cite{wood1980alpha}. That is, setting the parameters in generalized-$\alpha$ method to particular values, the method reduces to either the HHT-$\alpha$ or WBZ-$\alpha$ methods. The generalized-$\alpha$ method produces an algorithm which provides an optimal combination of high-frequency and low-frequency dissipation in the sense that for a given value of high-frequency dissipation, the algorithm minimizes the low-frequency dissipation; see \cite{chung1993time}. \\
To the authors' best knowledge, all these methods including the generalized-$\alpha$ method are limited to second order accuracy in time while the high-order Lax-Wendroff, Runge-Kutta, Adams-Moulton, and backward differentiation schemes (see \cite{butcher2016numerical}) lack the explicit control over the numerical dissipation of the high frequencies. Thus, we propose a $k$-step form of the generalized-$\alpha$ method that delivers $2k$ accuracy in time for problems of second derivatives in time. The main idea of our generalization is to add higher-order terms as the residuals obtained by solving auxiliary systems as well as adopting higher-order terms in Taylor expansions used in the generalized-$\alpha$ method. More precisely, to gain $2k$ order of accuracy, we build an algorithm that consists of $3n$ equations. For each set of three equations, we solver a system for a variable and update the other two explicitly. We then study the spectral properties of the resulting amplification matrix to determine the unconditional stability region. The resulting system has control over the high-frequency numerical dissipation as well as the same unconditional stability region as the second-order scheme. The revisions to an implemented generalize-$\alpha$ code are simple and the method remains highly efficient. 
The rest of this paper is organized as follows. Section 2 describes the problem under consideration. Section 3 presents the main idea of the fourth-order generalized-$\alpha$ method. We prove the fourth-order accuracy in time and the unconditional stability region. Section 4 discusses our method for $2k$-order accuracy by introducing the unconditional stability region as well as the numerical dissipation control parameters.  
Concluding remarks are given in Section 5.

\section{Problem Statement} \label{sec:ho}
We consider the second-order ordinary differential equation (ODE)
\begin{equation} \label{eq:ode}
\begin{aligned}
\ddot u + \lambda u& = 0,\\
u(0)&=u_0,\\
\dot{u}(0)&=v_0,
\end{aligned}
\end{equation}
where $u_0$ is the initial solution and $v_0$ is the first derivative of solution at the initial state. We choose the time marching intervals between $0 = t_0 < t_1 < \cdots < t_N = T$ where $T$ is the final time and define the time step-size as $\tau_n = t_{n+1} - t_n$. We also denote the approximation of $U(t_n), \dot U(t_n), \ddot U(t_n)$, by $U_n, V_n, A_n$ respectively. The generalized-$\alpha$ method for solving \eqref{eq:ode} at time-step $n$ is given by: 
\begin{subequations} \label{eq:ga}
\begin{align}
\label{eq:ga1}
A_{n+\alpha_m}&=-\lambda U_{n+\alpha_f}, \\
\label{eq:ga2}
U_{n+1}  &= U_n + \tau v_n + \frac{\tau^2}{2} A_n + \tau^2 \beta_1 \llbracket A_n \rrbracket, \\
\label{eq:ga3}
V_{n+1} & = V_n + \tau A_n + \tau \gamma_1 \llbracket A_n \rrbracket, 
\end{align}
\end{subequations}
with the initial solution $U_0 = u_0$, the initial velocity $V_0 = v_0$, the initial acceleration $A_0=-\lambda U_0$, and
\begin{equation} \label{eq:param}
\begin{aligned}
w_{n+\alpha_g} & = w_n + \alpha_g \llbracket w_n \rrbracket,\qquad \llbracket w_n \rrbracket & = w_{n+1} - w_n, \quad w = U, A, \quad g=m, f.\\
\end{aligned}
\end{equation}
When $\gamma_1 = \frac{1}{2} + \alpha_m - \alpha_f$ and  $\beta_1 = \frac{1}{4}( 1+\alpha_m - \alpha_f)^2$, this reduces to the generalized-$\alpha$ method to solve the hyperbolic problems; see \cite{chung1993time}. In order to control the numerical dissipation, the following parameter definitions are used
\begin{equation}
	\alpha_f=\frac{1}{1+\rho_\infty}, \qquad \alpha_m=\frac{2-\rho_\infty}{1+\rho_\infty},
\end{equation}
where, $\rho_\infty\in[0,1]$ is the user-defined control parameter. 
\section{Fourth-order generalized-$\alpha$ method} \label{sec:ho}
The equations \eqref{eq:ga2} and \eqref{eq:ga3} imply a sub-step time-marching for the generalized-$\alpha$ method and affect the order of accuracy. To obtain these two equations we use a Taylor expansion, and as a result they limit the accuracy of the method as they imply truncation error of $\mathcal{O}(\tau^3)$. 
To overcome this limitation, we derive these representations by applying a higher-order accurate Taylor expansion; see also the discussion in \cite{deng2019high}. For this purpose, let $\mathcal{L}^a(s)$ denotes the $a$-th order derivative of the function $s$ in time. Thus, for instance, to derive a fourth-order generalized-$\alpha$ method, we propose a method based on solving
\begin{equation} \label{eq:4a}
\begin{aligned}
A_{n}^{\alpha_1}&=-\lambda U_{n+1}, \\
\mathcal{L}^{3}(A_n^{\alpha_2})&=-\lambda \mathcal{L}^{1}(A_n^{\alpha_f}), \\
\end{aligned}
\end{equation}
with updating conditions
\begin{equation} \label{eq:4aup}
\begin{aligned}
U_{n+1}  &= U_n + \tau V_n + \frac{\tau^2}{2} A_n + \frac{\tau^3}{6} \mathcal{L}^{1}(A_n)+ \frac{\tau^4}{24}\mathcal{L}^{2}(A_n)+ \frac{\tau^5}{120} \mathcal{L}^{3}(A_n)+\beta_1 \tau^2 P_{n}, \\
V_{n+1}  &= V_n + \tau A_n + \frac{\tau^2}{2} \mathcal{L}^{1}(A_n) + \frac{\tau^3}{6} \mathcal{L}^{2}(A_n)+ \frac{\tau^4}{24} \mathcal{L}^{3}(A_n)+\gamma_1 \tau P_{n}, \\[0.2cm]
\mathcal{L}^{1}(A_{n+1})&= \mathcal{L}^{1}(A_{n}) + \tau \mathcal{L}^{2}(A_n) + \frac{\tau^2}{2}\mathcal{L}^{3}(A_n) + \tau^2 \beta_2  \llbracket \bm{\mathcal{L}^{3}(A_{n})} \rrbracket, \\
\mathcal{L}^{2}(A_{n+1})&= \mathcal{L}^{2}(A_n) + \tau \mathcal{L}^{3}(A_n) + \tau \gamma_2 \llbracket \bm{\mathcal{L}^{3}(A_{n})} \rrbracket,\\
\end{aligned}
\end{equation}
where
\begin{equation} 
\begin{aligned}
P_{n}&=A_{n+1}-A_n-\tau \mathcal{L}^{1}(A_n)-\frac{\tau^2}{2} \mathcal{L}^{2}(A_n)-\frac{\tau^3}{6} \mathcal{L}^{3}(A_n),\\
A_{n}^{\alpha_1}&=A_n+\tau \mathcal{L}^{1}(A_{n})+\frac{\tau^2}{2} \mathcal{L}^{2}(A_{n})+\frac{\tau^3}{6} \mathcal{L}^{3}(A_{n})+\alpha_1 P_{n},\\[0.2cm]
\mathcal{L}^{3}(A_n^{\alpha_2})&=\mathcal{L}^{3}(A_n)+\alpha_2\llbracket \bm{\mathcal{L}^{3}(A_{n})} \rrbracket,\\
\mathcal{L}^{1}(A_n^{\alpha_f})&=\mathcal{L}^{1}(A_n)+\alpha_f \llbracket \bm{\mathcal{L}^{1}(A_{n})} \rrbracket.\\
\end{aligned}
\end{equation}
Assuming sufficient smoothness of the solution on the time interval under analysis and by taking three derivatives from the first equation of \eqref{eq:4a} with respect to time, we readily obtain $\mathcal{L}^{3}(A_n^{\alpha_2})=-\lambda \mathcal{L}^{1}(A_n^{\alpha_f})$.
 The initial data are also obtained by using the given information on $U_0$ and $V_0$ as
 \begin{equation}
 \begin{aligned}
  A_0&=-\lambda U_0,\qquad \mathcal{L}^{1}(A_0)&=-\lambda V_0, \\
  \mathcal{L}^{2}(A_0)&=\lambda^2 U_0, \qquad \mathcal{L}^{3}(A_0)&=\lambda^2 V_0.
\end{aligned}
 \end{equation}
 
 \subsection{Order of accuracy in time}
 Herein, we determine the conditions on the parameters $\gamma_1$ and $\gamma_2$ to guarantee the fourth-order accuracy of the scheme in the form of \eqref{eq:4a}. We have the following result.
 
 \begin{theorem} \label{thm:3o}
Assuming that the solution is sufficiently smooth with respect to time, the method in \eqref{eq:4a} is fourth-order accurate in time given
 	\begin{equation} \label{eq:3ov1}
 	\gamma_1=\alpha_1-\frac{1}{2}, \qquad \qquad 	\gamma_2=\frac{1}{2}-\alpha_{f}+\alpha_2.
 	\end{equation}
 \end{theorem}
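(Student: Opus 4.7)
The plan is a local truncation error analysis. Substitute the exact values $u(t_n), v(t_n), a(t_n), a'(t_n), a''(t_n), a'''(t_n)$ (all expressible in $u, v$ via $a^{(k)} = -\lambda\,a^{(k-2)}$) into (3.1) and the update equations of (3.2), compute the scheme's one-step outputs $\tilde U_{n+1}, \tilde V_{n+1}, \tilde A_{n+1}, \tilde{\mathcal{L}}^j(A_{n+1})$, and compare against the Taylor expansions of the exact solution at $t_{n+1}$.

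First, I would rewrite the interpolated quantities as convex combinations: $A_n^{\alpha_1} = (1-\alpha_1)\,T_3(a_n) + \alpha_1\,\tilde A_{n+1}$ with $T_3(a_n) = a_n + \tau a'_n + \tfrac{\tau^2}{2}a''_n + \tfrac{\tau^3}{6}a'''_n$, and analogously for $\mathcal{L}^3(A_n^{\alpha_2})$ and $\mathcal{L}^1(A_n^{\alpha_f})$. After substituting exact values and using $-\lambda u_{n+1} = a_{n+1}$ and $-\lambda a'_{n+1} = a'''_{n+1}$, the first equation of (3.1) becomes a linear relation between $\delta_A := \tilde A_{n+1} - a(t_{n+1})$ and $\delta_U := \tilde U_{n+1} - u(t_{n+1})$ with Taylor-remainder forcing $(1-\alpha_1)\tfrac{\tau^4}{24}a^{(4)}(t_n)$, and the second equation of (3.1) couples $\delta_{a'''}$ to $\delta_{a'}$ with forcing $(\alpha_f - \alpha_2)\tau\,a^{(4)}(t_n)$. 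Expanding the four explicit updates then yields $\delta_U = \beta_1\tau^2\tilde P_n + O(\tau^6)$, $\delta_V = \gamma_1\tau\tilde P_n - \tfrac{\tau^5}{120}a^{(4)}(t_n) + O(\tau^6)$, $\delta_{a'} = \tau^2\beta_2\tilde Q_n + \tau^3(\beta_2 - \tfrac{1}{6})a^{(4)}(t_n) + O(\tau^4)$, and $\delta_{a''} = \tau\gamma_2\tilde Q_n + \tau^2(\gamma_2 - \tfrac{1}{2})a^{(4)}(t_n) + O(\tau^3)$, where $\tilde P_n = \tilde A_{n+1} - T_3(a_n)$ and $\tilde Q_n = \tilde{\mathcal{L}}^3(A_{n+1}) - a'''_n$.

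The crucial observation is structural: the $(U,V,A)$-subsystem built from the first implicit equation of (3.1) and the $U,V$ updates is algebraically a single Chung--Hulbert generalized-$\alpha$ step with effective parameters $(\alpha_m,\alpha_f)=(\alpha_1,1)$, because $U_{n+1}$ appears on the right-hand side of (3.1) without $\alpha$-interpolation; similarly, the auxiliary $(\mathcal{L}^1A,\mathcal{L}^2A,\mathcal{L}^3A)$-subsystem is a generalized-$\alpha$ step with parameters $(\alpha_2,\alpha_f)$. Applying the known second-order consistency relation $\gamma = \tfrac{1}{2} + \alpha_m - \alpha_f$ to each nested substep delivers exactly $\gamma_1 = \alpha_1 - \tfrac{1}{2}$ and $\gamma_2 = \tfrac{1}{2} - \alpha_f + \alpha_2$. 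Substituting these values back, and eliminating $\delta_A, \delta_{a'''}$ via the implicit relations from the previous paragraph, the leading $\tau^5 a^{(4)}$ contribution to $\delta_V$ (and the analogous $\tau^2 a^{(4)}$ contribution to $\delta_{a''}$) cancels, so $\delta_U, \delta_V = O(\tau^5)$. The standard Lax-type consistency-plus-stability argument, combined with the unconditional stability proved in the next subsection, then yields $U_n - u(t_n), V_n - v(t_n) = O(\tau^4)$, i.e., fourth-order accuracy.

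The principal difficulty is the algebraic bookkeeping when the implicit and explicit relations are combined: eliminating $\delta_A$ and $\delta_{a'''}$ introduces $\Omega = \lambda\tau^2$-dependent denominators in each LTE expression, and verifying that all the cancellations really reduce to the compact formulae stated in the theorem requires recognising the embedded generalized-$\alpha$ substructure so that one can invoke the existing second-order consistency condition rather than matching every Taylor coefficient of the coupled linear system by hand.
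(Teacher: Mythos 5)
Your structural observation (the auxiliary $(\mathcal{L}^1A,\mathcal{L}^2A,\mathcal{L}^3A)$ block is literally a Chung--Hulbert step for $w=\mathcal{L}^1(A)$ with parameters $(\alpha_2,\alpha_f,\beta_2,\gamma_2)$, and the $(U,V,A)$ block is a generalized-$\alpha$-type step with the $\alpha_f$-role equal to $1$) is sound and matches both the stated conditions and the paper's block-wise reasoning. The gap is in the verification route you propose: a one-step local truncation error analysis in which exact values are substituted into every slot cannot produce the conditions \eqref{eq:3ov1}, and the specific cancellations you claim do not occur. With exact inputs at $t_n$, the first implicit equation forces $\delta_A=\tfrac{1-\alpha_1}{\alpha_1}\tfrac{\tau^4}{24}a^{(4)}_n+\mathcal{O}(\tau^5)$ and hence $\tilde P_n=\tfrac{1}{\alpha_1}\tfrac{\tau^4}{24}a^{(4)}_n+\mathcal{O}(\tau^5)$, so $\delta_U=\mathcal{O}(\tau^6)$ and $\delta_V=\bigl(\tfrac{\gamma_1}{24\alpha_1}-\tfrac{1}{120}\bigr)\tau^5a^{(4)}_n+\mathcal{O}(\tau^6)$ for \emph{every} $\gamma_1$: the analysis is blind to the condition $\gamma_1=\alpha_1-\tfrac12$, and the $\tau^5$ term does not cancel under it (it would require $\gamma_1=\alpha_1/5$), it simply does not need to. Worse, for the auxiliary block the same computation gives $\tilde Q_n=\tfrac{\alpha_f}{\alpha_2}\tau a^{(4)}_n+\mathcal{O}(\tau^2)$ and
\begin{equation*}
\delta_{a''}=\Bigl(\gamma_2\tfrac{\alpha_f}{\alpha_2}-\tfrac12\Bigr)\tau^2a^{(4)}_n+\mathcal{O}(\tau^3)
=\frac{(\alpha_f-\alpha_2)\bigl(\tfrac12-\alpha_f\bigr)}{\alpha_2}\,\tau^2a^{(4)}_n+\mathcal{O}(\tau^3),
\end{equation*}
which is generically nonzero under $\gamma_2=\tfrac12-\alpha_f+\alpha_2$ (it vanishes only if $\alpha_f=\tfrac12$ or $\alpha_f=\alpha_2$). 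So the claimed cancellation of the $\tau^2a^{(4)}$ term in $\delta_{a''}$ is false.

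This is not an artifact of this scheme: the classical generalized-$\alpha$ method itself exhibits a one-step velocity error of size $\bigl(\gamma\alpha_f/\alpha_m-\tfrac12\bigr)\tau^2a'_n=\mathcal{O}(\tau^2)$ when fed exact data, even with $\gamma=\tfrac12+\alpha_m-\alpha_f$ (e.g.\ $\rho_\infty=0$, $u=\cos t$ at $t_n=\pi/4$ gives leading error $\tfrac{\sqrt2}{8}\tau^2$); second order emerges only because the internal acceleration-type variables carry a consistent offset that cancels across steps. The correct framework — and the one the paper uses — is therefore a multistep consistency analysis: eliminate the internal variables via the invariants of the amplification matrix (trace, sum of principal minors, determinant) to obtain the three-term recurrence \eqref{eq:a40} for each $3\times3$ block, Taylor-expand over the stencil $t_{n-2},\dots,t_{n+1}$, and read off $\gamma_2=\tfrac12-\alpha_f+\alpha_2$ (and likewise $\gamma_1=\alpha_1-\tfrac12$ for the upper block with the residual $\mathcal{R}$ set aside); only then does your (correct) final step — auxiliary variables second-order accurate, feeding the $\tau^3$--$\tau^5$ Taylor terms of $U,V$ so that each step is perturbed by $\mathcal{O}(\tau^5)$, plus stability — deliver fourth order. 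Alternatively, you could salvage a one-step argument by allowing the input internal variables to carry unknown $\mathcal{O}(\tau)$/$\mathcal{O}(\tau^2)$ offsets and requiring these offsets to be reproduced step-to-step, but as written your per-equation exact-data expansion neither derives \eqref{eq:3ov1} nor supports the cancellations it asserts.
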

 \begin{proof}
Substituting \eqref{eq:4aup} into \eqref{eq:4a}, we obtain a system of equations for each time step as
 \begin{equation}
 A\bold{ U_{n+1}}=B \bold{U_n},
 \end{equation}
 where 
\begin{equation}
\begin{aligned}A&=
\begin{bmatrix}
1 & 0 & -\beta_1&0&0&0 \\
0 & 1 & -\gamma_1&0&0&0 \\
\tau^2 \lambda & 0 & \alpha_1&0&0&0 \\
0 & 0 & 0&1&0&-\beta_2 \\
0 & 0 & 0&0&1&-\gamma_2 \\
0 & 0 & 0&\tau^2\alpha_f \lambda&0&\alpha_2 \\
\end{bmatrix},
\\
B&=
\begin{bmatrix}
1 & 1 & \frac{1}{2}-\beta_1& \frac{1}{6}-\beta_1& \frac{1}{24}-\frac{\beta_1}{2}& \frac{1}{120}-\frac{\beta_1}{6} \\
0 & 1 & 1-\gamma_1& \frac{1}{2}-\beta_1& \frac{1}{6}-\frac{\beta_1}{2}& \frac{1}{24}-\frac{\beta_1}{6} \\
0  & 0 & \alpha_1-1& \alpha_2-1&\frac{1}{2}( \alpha_2-1)&\frac{1}{6}(\alpha_2-1) \\
0 & 0 & 0&1&1&\frac{1}{2}-\beta_2 \\
0 & 0 & 0&0&1&1-\gamma_2 \\
0 & 0 & 0&-\tau^2(1-\alpha_{f}) \lambda&0&\alpha_2-1 \\
\end{bmatrix},\\
\bold{U_{n}}&=
\begin{bmatrix}
U_{n}  \\
\tau V_{n}  \\
\tau^2 A_{n}\\
\tau^3\mathcal{L}^{1}(A_{n})\\
\tau^4 \mathcal{L}^{2}(A_{n})\\
\tau^5 \mathcal{L}^{3}(A_{n})\\
\end{bmatrix}.
\end{aligned}
\end{equation}
Thus, the amplification matrix $G$ is
\begin{equation} \label{eq:ampm}
G=A^{-1}B.
\end{equation}
This matrix-matrix multiplication results in upper-block diagonal matrix. The high-order unknowns $\mathcal{L}^{1}(A_{n}),\mathcal{L}^{2}(A_{n}),\mathcal{L}^{3}(A_{n})$, associated with the lower block on the diagonal, are required to be second-order accurate. The upper block on the diagonal also leads to second-order accuracy. Then, we obtain the fourth-order accuracy by adding the high-order terms through the upper off-diagonal block to our solution. Thus, we analyze each block to introduce corresponding parameters. For any arbitrary amplification matrix, we can state the following
\begin{equation} \label{eq:a40}
G_0 \mathcal{L}^{1}(A_{n+1}) - G_1 \mathcal{L}^{1}(A_n) + G_2 \mathcal{L}^{1}(A_{n-1}) - G_3 \mathcal{L}^{1}(A_{n-2}) = 0,
\end{equation}
where the coefficients are invariants of the amplification matrix as $G_0 = 1$, $G_1 $ is the trace of $G$, $G_2$ is the sum of principal minors of $G$, and $G_3$ is the determinant of $G$. By using a Taylor series expansion, we obtain
\begin{equation} \label{eq:te3}
\begin{aligned}
\mathcal{L}^{1}(A_{n+1}) & = \mathcal{L}^{1}(A_{n})+\tau \mathcal{L}^{2}(A_{n}) + \frac{\tau^2}{2} \mathcal{L}^{3}(A_{n})+ \mathcal{O}(\tau^3), \\
\mathcal{L}^{1}(A_{n-1}) & = \mathcal{L}^{1}(A_{n})-\tau \mathcal{L}^{2}(A_{n}) + \frac{\tau^2}{2} \mathcal{L}^{3}(A_{n})+ \mathcal{O}(\tau^3), \\
\mathcal{L}^{1}(A_{n-2}) & = \mathcal{L}^{1}(A_{n}) -2\tau \mathcal{L}^{2}(A_{n}) + {2\tau^2} \mathcal{L}^{3}(A_{n})+ \mathcal{O}(\tau^3).
\end{aligned}
\end{equation}
Setting $\gamma_2=\frac{1}{2}-\alpha_{f}+\alpha_2$, \eqref{eq:te3} is second-order accurate in time. Then, we consider the other three equations in \eqref{eq:4a} and \eqref{eq:4aup}. Then, we have
\begin{equation} \label{eq:te6}
\begin{aligned}
U_{n+1} & = U_n + \tau V_n + \frac{\tau^2}{2} A_n+\mathcal{R} , \\
U_{n-1} & = U_n - \tau V_n + \frac{\tau^2}{2} A_n+\mathcal{R} , \\
U_{n-2} & = U_n - 2\tau V_n + 4\frac{\tau^2}{2} A_n +\mathcal{R}, \\
\end{aligned}
\end{equation}
where the term $\mathcal{R}$ defined as a function of $\mathcal{L}^{1}(A_{n}), \mathcal{L}^{2}(A_{n})$ and  $\mathcal{L}^{3}(A_{n})$. This term is a  residual and we neglected it in the analysis. Thus, by following a similar approach, we can verify that the remaining terms are of the second-order accuracy in time. Then, we add the residuals to the second-order accurate solution, in order to have the truncation error of $\mathcal{O}(\tau^5)$ and consequently, a fourth-order accurate scheme in time. This completes the proof.

\end{proof}
\subsection{Stability analysis and eigenvalue control}\label{sec:anal}
In order to have an unconditionally stable method, we bound the absolute values of the eigenvalues of amplification by one. For this purpose,by considering $\lambda\tau^2$ as one term, we first calculate the eigenvalues of \eqref{eq:ampm} for the case $\sigma=\lambda\tau^2 \to 0$ as
\begin{equation}\label{eq:t0}
	\lambda_1=\lambda_2=\lambda_3=\lambda_4=1,\qquad \lambda_5=\frac{\alpha_1-1}{\alpha_1}, \qquad \lambda_6=\frac{\alpha_2-1}{\alpha_2}.
\end{equation}
The boundedness of $\lambda_5$ and $\lambda_6$ in \eqref{eq:t0} implies the conditions $ {\alpha_1}\geq \frac{1}{2} $ and $ {\alpha_1} \geq \frac{1}{2} $.
For the case of $\sigma \to \infty$, we show in Figure \ref{fig:eig} the stability region for two cases in which $\alpha_1$ is constant as well as when it is equal to $\alpha_1$. Hence, to achieve unconditional stability, the corresponding parameters can be set to
\begin{equation}
1 \leq {\alpha_1}, \qquad \frac{1}{2} \leq {\alpha_f} \leq {\alpha_2}.
\end{equation}
\begin{figure}[!ht]	
	\centering\includegraphics[width=6.5cm]{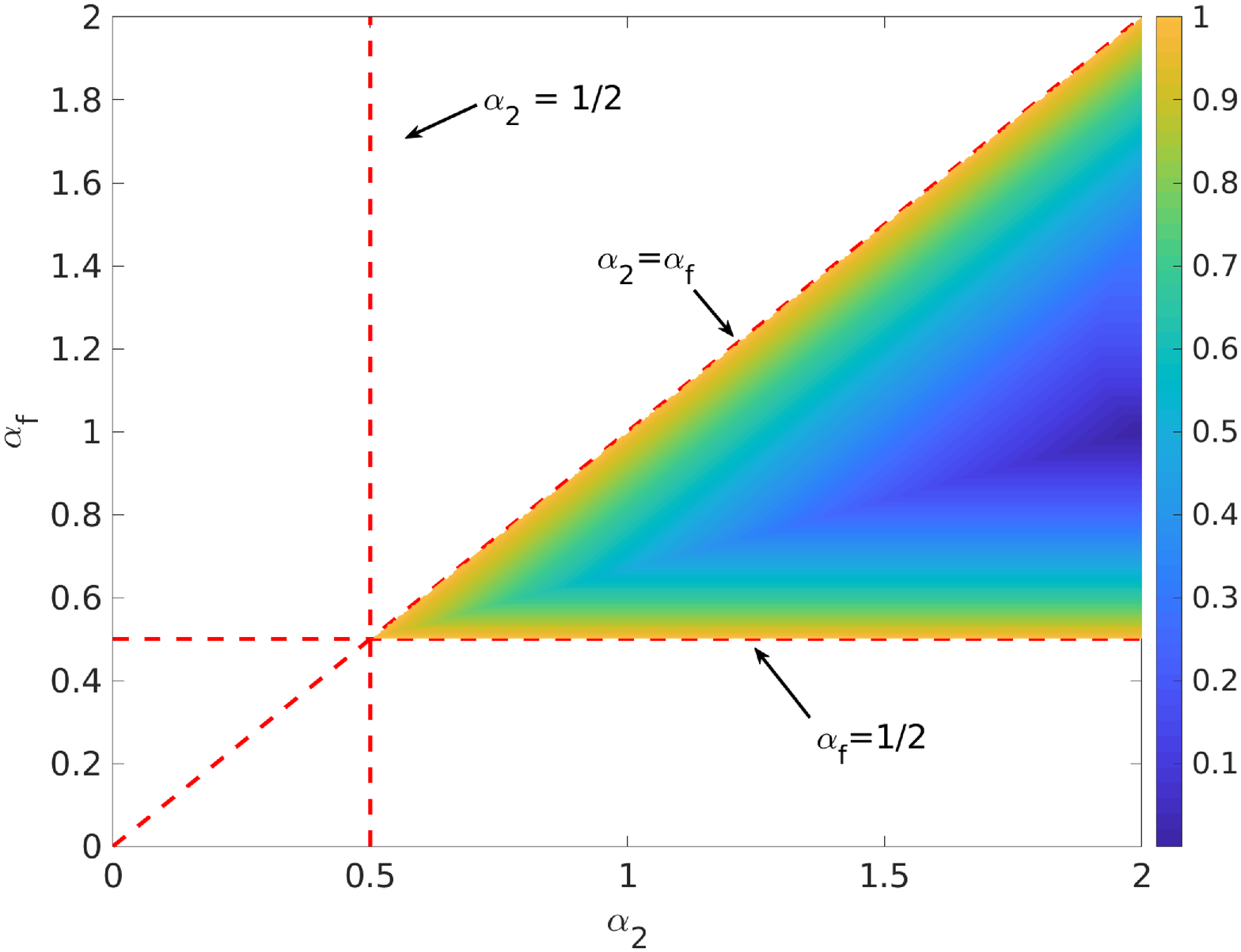} 
	\hspace{0.15 cm}
	\centering\includegraphics[width=6.5cm]{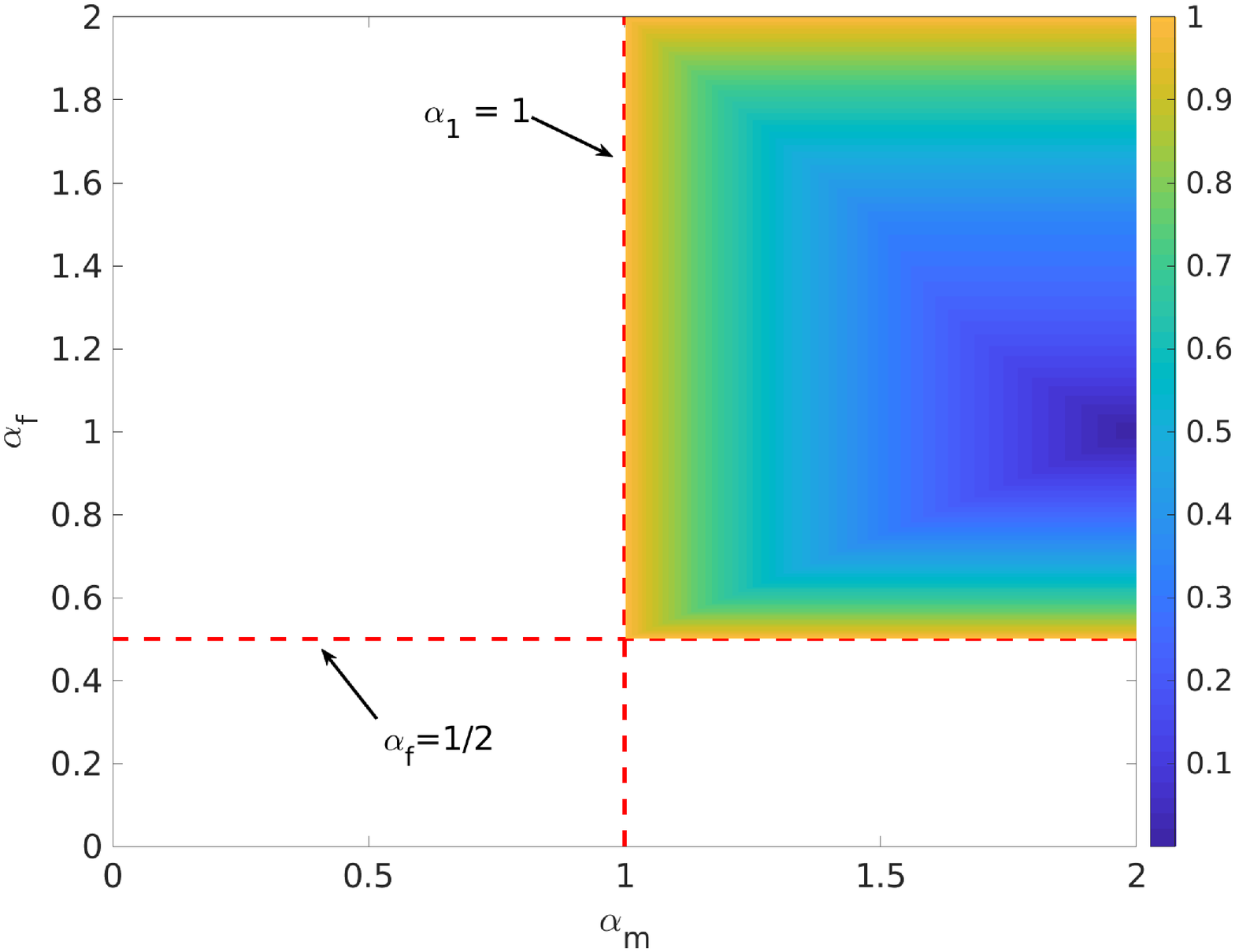} 	
	\caption{The stability region for the cases $\alpha_1=2$ on the left, and $\alpha_2=2$ on the right.}
	\label{fig:eig}
\end{figure}
To provide the control on the numerical dissipation, following closely the analysis on the second-order generalized-$\alpha$ method in \cite{chung1993time,jansen2000generalized,behnoudfar2018variationally}, we use two other parameters $\beta_1$ and $\beta_2$ to set the complex part of the eigenvalues equal to zero when $\sigma \to \infty$. Thus, we have
 \begin{equation}
 	\beta_1=\frac{1}{16}\big(1+4\gamma_1+4\gamma_1^2\big), \qquad \beta_2=\frac{1}{16}\big(1+4\gamma_2+4\gamma_2^2\big).
 \end{equation}
Then, we set user-controlled parameters $\rho^\infty_1$, $\rho^\infty_2$ and use the following definitions to propose the method: 
\begin{equation}
\begin{aligned}
\alpha_1=\frac{2}{1+\rho^\infty_1}, \qquad \alpha_{f}=\frac{1}{1+\rho^\infty_2}, \qquad \alpha_2=\frac{2-\rho^\infty_2}{1+\rho^\infty_2}.
\end{aligned}
\end{equation}
\begin{remark}
	The amplification matrix when $\sigma \to \infty$ is a block matrix and it has two sets of eigenvalues. Two of the eigenvalues equal to $\rho^\infty_1$, one is zero and the other three equal to $\rho^\infty_2$. We omit the details for brevity.
\end{remark}
Therefore, by choosing $0\leq \rho^\infty_1,\rho^\infty_2\leq1 $, one controls the eigenvalues of the amplification matrix and the high-frequency damping. We show this in Figure \ref{fig:dis} where $\sigma=-\lambda \tau^2$. For large $\sigma$, the eigenvalues $\lambda_{1,2,3}$ corresponding to the first block of the amplification matrix approach $0$ and $\rho^\infty_1$ and eigenvalues of the second  block $\lambda_{4,5,6}$ reach to $\rho^\infty_2$.
\begin{figure}[!ht]	
	\centering\includegraphics[width=6.3cm]{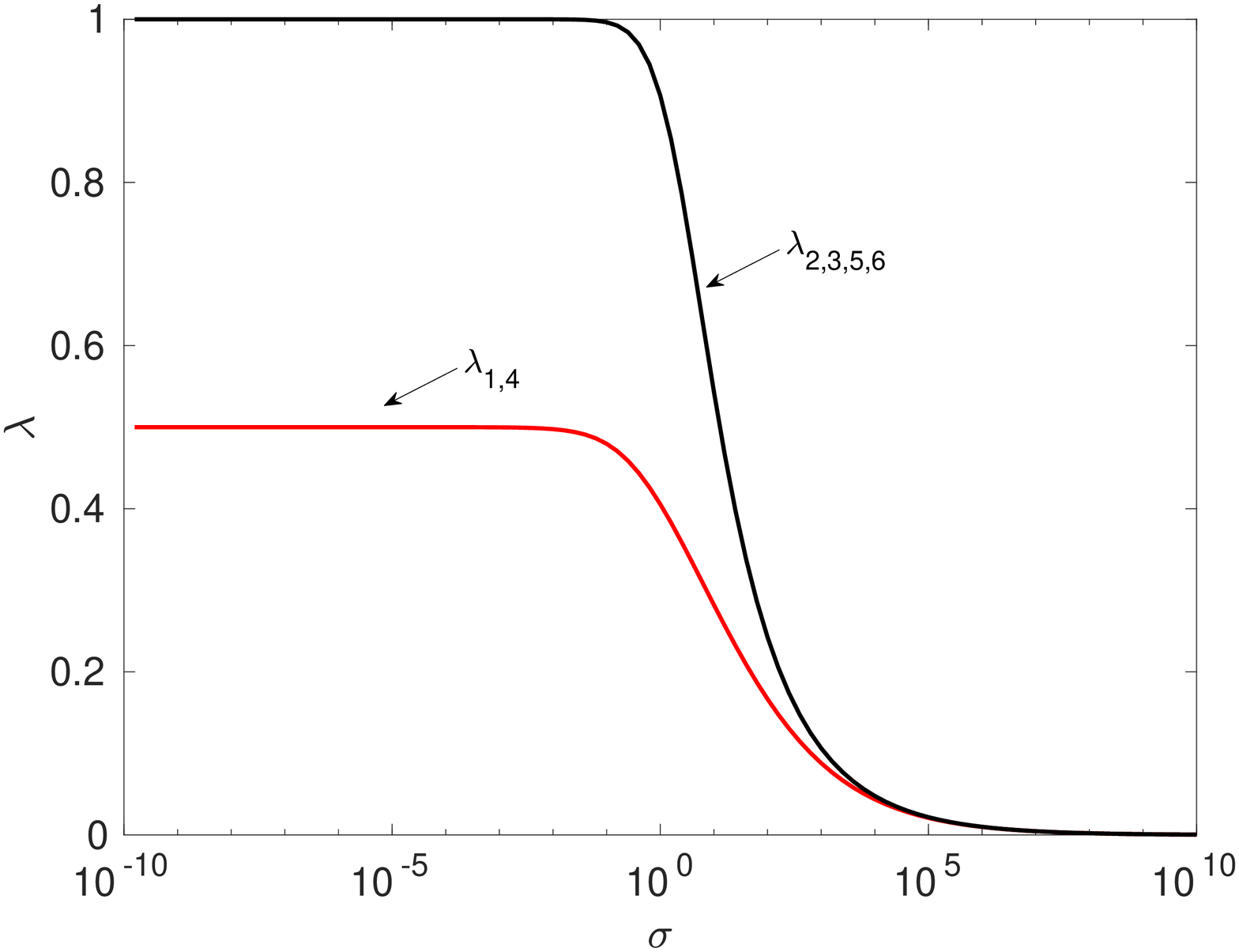} 
	\centering\includegraphics[width=6.3cm]{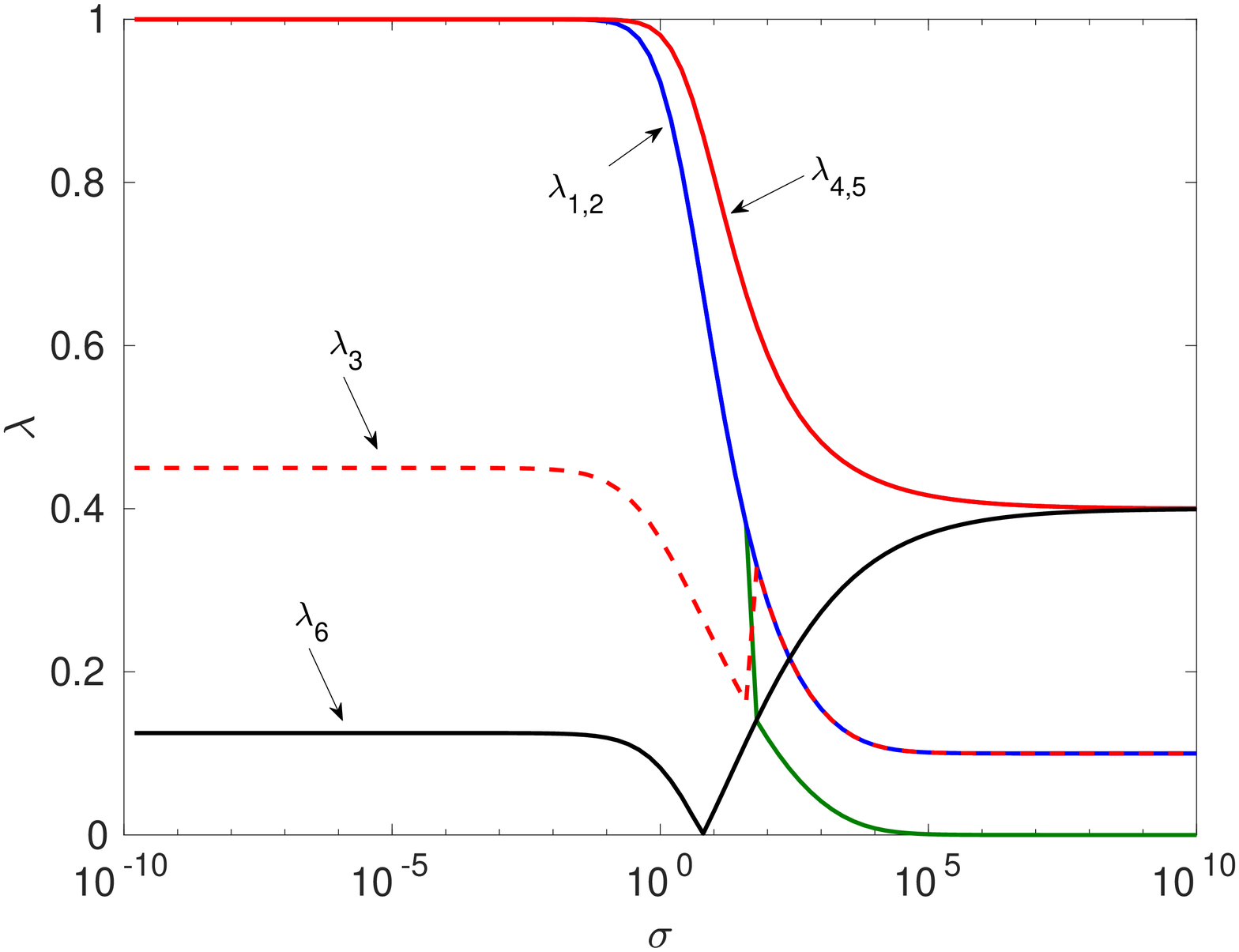} 	
	\caption{The eigenvalues of the amplification matrix belong to the fourth-order accuracy time-marching scheme when $ \rho^\infty_1= \rho^\infty_2=0$ on the left, and $\rho^\infty_1=0.1$ and $ \rho^\infty_2=0.4$ on the right.}
	\label{fig:dis}
\end{figure}

\section{Higher-order accuracy in time}

In general, for $k\ge 2$, we obtain the $2k$-th order generalized-$\alpha$ method by solving
\begin{equation} \label{eq:ho1}
\begin{aligned}
A_{n}^{\alpha_1}&=-\lambda U_{n+1}, \\
\mathcal{L}^{3j-3}(A_n^{\alpha_j})&=-\lambda \mathcal{L}^{3j-5}(A_{n+1}),\qquad j=2, \cdots, k-1, \\
\mathcal{L}^{3k-3}(A_n^{\alpha_k})&=-\lambda \mathcal{L}^{3k-5}(A_{n}^{\alpha_{f}}), \\
\end{aligned}
\end{equation}
and updating the system using the following
\begin{equation} \label{eq:ho2}
\begin{aligned}
U_{n+1} & = U_n + \tau v_n + \frac{\tau^2}{2} A_n + \frac{\tau^3}{6} \mathcal{L}^{1}(A_n)+ \cdots+\frac{\tau^{3k-3}}{(3k-3) !} \mathcal{L}^{3k-3}(A_n)+\beta_1 \tau^2 P_{n,1}, \\
V_{n+1}  &= V_n + \tau A_n + \frac{\tau^2}{2}  \mathcal{L}^{1}(A_n) + \cdots+\frac{\tau^{3k-4}}{(3k-4) !} \mathcal{L}^{3k-3}(A_n)+ \tau \gamma_1 P_{n,1}, \\
\mathcal{L}^{3j-5}(A_{n+1})&= \mathcal{L}^{3j-5}(A_n) + \tau \mathcal{L}^{3j-4}(A_n) + \frac{\tau^2}{2}\mathcal{L}^{3j-3}(A_n) + \tau^2 \beta_j P_{n,j}, \\
\mathcal{L}^{3j-4}(A_{n+1})&= \mathcal{L}^{3j-4}(A_{n}) + \tau \mathcal{L}^{3j-3}(A_{n}) + \tau \gamma_j P_{n,j},\qquad j=2, \cdots, k-1, \\[0.2cm] 
\mathcal{L}^{3k-5}(A_{n+1})&= \mathcal{L}^{3k-5}(A_n) + \tau {L}^{3k-4}(A_n) + \frac{\tau^2}{2}\mathcal{L}^{3k-3}(A_n) + \tau^2 \beta_k  \llbracket \mathcal{L}^{3k-3}(A_{n} \rrbracket, \\
\mathcal{L}^{3k-4}(A_{n+1}) &=\mathcal{L}^{3k-4}(A_{n}) + \tau \mathcal{L}^{3k-3}(A_{n}) + \tau \gamma_k \llbracket \mathcal{L}^{3k-3}(A_{n} \rrbracket,\\
\end{aligned}
\end{equation}
where we have
\begin{equation} \label{eq:ho3}
\begin{aligned}
P_{n,1}&=A_{n+1}-A_n-\tau\mathcal{L}^{1}(A_{n})-\cdots-\frac{\tau^{3k-5}}{(3k-5) !} \mathcal{L}^{3k-3}(A_{n}),\\
A_{n}^{\alpha_1}&=A_n+\tau \mathcal{L}^{1}(A_{n})+\cdots+\frac{\tau^{3k-3}}{(3k-3) !} \mathcal{L}^{3k-3}(A_{n})+\alpha_1 P_{n,1},\\
P_{n,j}&=\mathcal{L}^{3j-3}(A_{n+1})-\mathcal{L}^{3j-3}(A_n)-\cdots-\frac{\tau^3}{6} \mathcal{L}^{3j}(A_n),\\
A_{n}^{\alpha_j}&=\mathcal{L}^{3j-3}(A_n)+\cdots+\frac{\tau^3}{6} \mathcal{L}^{3j}(A_n)+\alpha_jP_{n,j}, \qquad j=2, \cdots, k-1,\\
\mathcal{L}^{3k-3}(A_n^{\alpha_k})&=\mathcal{L}^{3k-3}(A_n)+\alpha_k \llbracket \mathcal{L}^{3k-3}(A_{n} \rrbracket,\\
\mathcal{L}^{3k-5}(A_n^{\alpha_f})&=\mathcal{L}^{k3-5}(A_n)+\alpha_f \llbracket \mathcal{L}^{3k-5}(A_{n} \rrbracket. \\
\end{aligned}
\end{equation}
For $k=1, 2$, this reduces to the second- and fourth-order generalized-$\alpha$ methods, respectively.
Using a similar argument to the one we described in the proof of Theorem \ref{thm:3o}, we can establish higher-order schemes in the form of \eqref{eq:ho1} and \eqref{eq:ho2}. 
To seek $p$-th order  ($p\ge 4$) scheme,  we substitute \eqref{eq:ho2} into \eqref{eq:ho1} and obtain a system written in a matrix form
\begin{equation}\label{eq:eig2}
L \bfs{U}_{n+1} = R \bfs{U}_n.
\end{equation}
Therefore, the amplification matrix is $L^{-1}R$. 
\begin{theorem}
For $u$ that is sufficiently smooth in time, the scheme defined by \eqref{eq:ho1}-\eqref{eq:ho3} for the ODE \eqref{eq:ode} is of $2k$-order accurate in time provided
	\begin{equation} \label{eq:3ov1}
	\begin{aligned}
\gamma_i&=\alpha_i-\frac{1}{2}, \qquad \text{for } i=1,\cdots,k-1,\\
\gamma_i&=\frac{1}{2}-\alpha_{f}+\alpha_k.
	\end{aligned}
\end{equation}
\begin{proof}
	The amplification matrix corresponding to the scheme is a block matrix shown in Figure \ref{fig:shem} corresponding $k=5$. Each block is a $3 \times 3$ matrix where the blue blocks are zero. The green blocks also have similar entries to the entries of the amplification used in the second-order generalized-$\alpha$ method. Hence, the terms used in \eqref{eq:a40} can be calculated separately for each block and considering the higher-order terms to have a second-order accuracy. Consequently, after solving the whole system, we have a truncation error of $\mathcal{O}(\tau^{2k+1})$.
\begin{figure}[!ht]
	\centering\includegraphics[width=6.5cm]{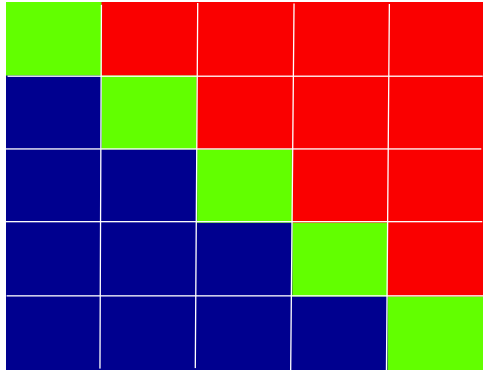} 	
	\caption{The amplification corresponding to the method with $2k$-order accurate in time. The blue blocks are zeros and green ones have the structure similar to the second-order generalized-$\alpha$ method.}
	\label{fig:shem}
\end{figure}
\end{proof}
\end{theorem}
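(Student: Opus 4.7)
The plan is to follow the same architecture as the proof of Theorem~\ref{thm:3o} and to exploit the block upper-triangular form of the amplification matrix $G = L^{-1}R$ sketched in Figure~\ref{fig:shem}. First I would substitute the update formulas~\eqref{eq:ho2} into~\eqref{eq:ho1} to obtain the pencil $(L,R)$ explicitly. The crucial structural observation is that each diagonal block is a $3\times 3$ matrix whose entries have the same shape as the amplification matrix of the second-order generalized-$\alpha$ scheme, indexed by the triple $\bigl(\mathcal{L}^{3j-5}(A),\mathcal{L}^{3j-4}(A),\mathcal{L}^{3j-3}(A)\bigr)$ for $j=2,\dots,k$, with the $j=1$ block acting on $(U,V,A)$. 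The blocks strictly below the diagonal vanish, so both the spectral analysis and the local truncation analysis decouple block-by-block.

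Next, for each diagonal block I would write down the local analogue of the three-term recurrence~\eqref{eq:a40} using the three invariants (trace, sum of principal minors, determinant) of the block as coefficients, and Taylor-expand $\mathcal{L}^{m}(A_{n\pm 1})$ and $\mathcal{L}^{m}(A_{n-2})$ around $t_n$ as in~\eqref{eq:te3}. Requiring that block to be second-order accurate reduces, after collecting like powers of $\tau$, to an algebraic identity in $\gamma_j$, $\beta_j$, $\alpha_j$, $\alpha_f$ that is structurally identical to the one solved in the $k=2$ case. Solving these identities produces $\gamma_j=\alpha_j-\tfrac12$ for $j=1,\dots,k-1$ and $\gamma_k=\tfrac12-\alpha_f+\alpha_k$ for the terminal block, matching~\eqref{eq:3ov1}.

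Finally, I would lift the block-wise second-order accuracy to global $2k$-order accuracy of $U_{n+1}$. The upper off-diagonal blocks couple the $(U,V,A)$ triple to the higher derivatives through the residuals $P_{n,j}$ and through the Taylor tails retained up to order $\tau^{3k-3}$ in the $U$- and $V$-updates of~\eqref{eq:ho2}. Because each $P_{n,j}$ is, by the block-wise analysis, already $\mathcal{O}(\tau^{3})$ in the auxiliary derivative, its contribution to the $U$-update arrives pre-multiplied by $\tau^{2}$ and with weights vanishing fast enough that the telescoped local truncation error collapses to $\mathcal{O}(\tau^{2k+1})$, giving global order $2k$ after summation over $N=T/\tau$ steps in the standard Lax manner.

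The main obstacle will be a careful accounting of those off-diagonal contributions: one must verify that every residual $P_{n,j}$ enters the $U$- and $V$-updates with a prefactor whose $\tau$-order is high enough that, even after multiplication by $\lambda$ in the coupling terms induced by~\eqref{eq:ho1}, the accumulated truncation error stays strictly below $\tau^{2k}$ after telescoping. A secondary technicality is that the Taylor expansions used at every level require solutions sufficiently smooth up to derivative order $3k-3$, which I would state as the standing regularity hypothesis at the outset.
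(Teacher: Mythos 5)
Your proposal follows essentially the same route as the paper's proof: exploit the block upper-triangular structure of the amplification matrix whose $3\times 3$ diagonal blocks mirror the second-order generalized-$\alpha$ scheme, apply the invariant-based recurrence (the analogue of \eqref{eq:a40}) with Taylor expansions block by block to force second-order accuracy of each block and thereby obtain the conditions \eqref{eq:3ov1}, and then use the higher-order Taylor tails and the residuals $P_{n,j}$ in \eqref{eq:ho2} to conclude a local truncation error of $\mathcal{O}(\tau^{2k+1})$. If anything, your explicit accounting of how the off-diagonal coupling and the $\tau$-scaling of the auxiliary variables lift block-wise second-order accuracy to overall order $2k$ is more detailed than the paper's sketch, which simply asserts this step.
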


\subsection{Stability analysis and control on dissipation}
To study the eigenvalues of the amplification matrix \eqref{eq:eig2}, we refer to the sketch that Figure \ref{fig:shem} presents, where we can obtain all the eigenvalues by calculating the eigenvalues of each green block. Hence, we propose the following for the parameters (follow a similar logic to the one we describe in Section \ref{sec:anal})
\begin{equation}
\beta_i=\frac{1}{16}\big(1+4\gamma_i+4\gamma_i^2\big), \qquad i=1,\cdots,k.
\end{equation}
Similarly, we define user-controlled parameters $\rho_\infty^i$ and set
\begin{equation}
\begin{aligned}
\alpha_i&=\frac{2}{1+\rho^\infty_i}, \qquad i=1, \cdots, k-1,\\
\alpha_k&=\frac{2-\rho^\infty_k}{1+\rho^\infty_k},\qquad\alpha_f=\frac{1}{1+\rho^\infty_k}.
\end{aligned}
\end{equation}
where the eigenvalues of block $i$ approaches to $\rho^\infty_i$ when $\sigma \to \infty$.

\section{Concluding remarks}
We propose a new class of higher-order generalized-$\alpha$ methods that maintain all the attractive features  of the original generalized-$\alpha$ for hyperbolic systems.. In particular, at each time step, we obtain a $2k$ order of accuracy in time by solving $k$ matrix systems consecutively and implicitly. We then update the other $2k$ variables explicitly. The dissipation control is also provided by introducing parameters corresponding to each set of equations.

\section*{Acknowledgement}
This publication was made possible in part by the CSIRO Professorial Chair in Computational Geoscience at Curtin University and the Deep Earth Imaging Enterprise Future Science Platforms of the Commonwealth Scientific Industrial Research Organization, CSIRO, of Australia. Additional support was provided by the European Union's Horizon 2020 Research and Innovation Program of the Marie Sk{\l}odowska-Curie grant agreement No. 777778, the Mega-grant of the Russian Federation Government (N 14.Y26.31.0013), the Institute for Geoscience Research (TIGeR), and the Curtin Institute for Computation. The J. Tinsley Oden Faculty Fellowship Research Program at the Institute for Computational Engineering and Sciences (ICES) of the University of Texas at Austin has partially supported the visits of VMC to ICES. The authors also would
like to acknowledge the contribution of an Australian Government Research Training Program Scholarship in supporting this research.

\bibliographystyle{elsarticle-harv}\biboptions{square,sort,comma,numbers}
\bibliography{ref}

\end{document}